\documentclass[10pt, reqno]{amsart}
\usepackage{amsmath, amsthm, amscd, amsfonts, amssymb, graphicx, color}
\usepackage[bookmarksnumbered, colorlinks, plainpages]{hyperref}
\hypersetup{colorlinks=true,linkcolor=red, anchorcolor=green, citecolor=cyan, urlcolor=red, filecolor=magenta, pdftoolbar=true}
\usepackage{mathrsfs}


\newtheorem{theorem}{Theorem}[section]
\newtheorem{lemma}[theorem]{Lemma}

\newtheorem{corollary}[theorem]{Corollary}
\theoremstyle{definition}

\newtheorem{example}[theorem]{Example}

\theoremstyle{remark}
\newtheorem{remark}[theorem]{Remark}
\numberwithin{equation}{section}

\begin{document}
\setcounter{page}{1}

\title[K-theory  of rational quadratic forms]{K-theory  of rational quadratic forms}

\author[I.Nikolaev]
{Igor Nikolaev$^1$}

\address{$^{1}$ Department of Mathematics and Computer Science, St.~John's University, 8000 Utopia Parkway,  
New York,  NY 11439, United States.}
\email{\textcolor[rgb]{0.00,0.00,0.84}{igor.v.nikolaev@gmail.com}}


\subjclass[2010]{Primary 11E16; Secondary 46L85.}

\keywords{rational quadratic forms, $C^*$-algebras.}


\begin{abstract}
We compute the genus of a rational quadratic form in terms of 
the K-theory of  a  $C^*$-algebra  attached to the adelic orthogonal group of the form.  
As a corollary,  one gets  a higher composition law for  the rational quadratic forms.  
As an illustration, we consider  the Gauss composition  of the 
binary quadratic forms. 
\end{abstract}

\maketitle

\section{Introduction}
The binary quadratic forms $q(u,v)=\{au^2+buv+cv^2 ~|~ a,b,c\in\mathbf{Z}\}$
were studied by C.-F.~Gauss.  Two forms $q$ and $q'$  are said to be equivalent,
if a substitution $\{u=\alpha u'+\beta v', ~v=\gamma u' +\delta v' ~|~\alpha,\beta,\gamma,\delta\in\mathbf{Z},
~\alpha\delta-\beta\gamma=1\}$ transforms $q$ into $q'$. It is easy to see that the discriminant $\Delta=b^2-4ac$
of the form $q(u,v)$ is an invariant of the substitution and, therefore,  the equivalent forms  have the same discriminant. 
But the converse  is false in general.  Gauss showed that there exists a finite number of the pairwise non-equivalent 
binary quadratic forms having the same discriminant.   Moreover,  the equivalence classes make  an abelian 
group $\mathcal{G}$ under a composition defined on these  forms [Cassels 1978] \cite[Chapter 14]{C}.  
The cardinality $g=|\mathcal{G}|$ of such a group is called the genus of $q(u,v)$. 
An  extension of  the Gauss composition to the general quadratic forms 
$q(x):=q(x_1,\dots,x_n)=\{\sum_{i=1}^n \sum_{j=1}^n a_{ij} x_i x_j ~|~ a_{ij}\in\mathbf{Z}, ~n\ge 1\}$
is a difficult and important problem [Bhargava 2004] \cite{Bha1}, \cite{Bha2}, \cite{Bha3} and [Bhargava 2008] \cite{Bha4},
see  the  survey   [Bhargava 2006] \cite{Bha5}.

Let $G(K)$ be a non-compact  reductive algebraic group defined over a number field $K$. 
Denote by  $\mathbb{A}$ the ring of adeles of $K$ and by $\mathbb{A}_{\infty}$ 
a subring of  the integer adeles.
It is well known that  $G(K)$ is a discrete subgroup of $G(\mathbb{A})$ and 
the double cosets  $G(\mathbb{A}_{\infty})\backslash G(\mathbb{A})/G(K)$ is a finite set.
The  set $G(\mathbb{A}_{\infty})\backslash G(\mathbb{A})/G(K)$ is an important arithmetic invariant
of the group $G(K)$. 
For instance, if $G\cong O(q)$ is the orthogonal group of a quadratic form,  
then  $|G(\mathbb{A}_{\infty})\backslash G(\mathbb{A})/G(K)|=g$
 [Platonov \& Rapinchuk 1994] \cite[Chapter 8]{PR}.

Consider a  Banach algebra $L^1(G(\mathbb{A})/G(K))$  of 
the  integrable complex-valued  functions on the homogeneous space $G(\mathbb{A})/G(K)$ 
endowed  with the operator norm.   Recall that the addition of functions 
$f_1, f_2\in L^1(G(\mathbb{A})/G(K))$ is defined pointwise  and 
 multiplication is given by the convolution product:
\begin{equation}\label{eq1.1}
(f_1\ast f_2)(g)=\int_{G(\mathbb{A})/G(K)} f_1(gh^{-1})f_2(h)dh. 
\end{equation}  
By  $\mathscr{A}$ we understand  an enveloping $C^*$-algebra   of  the algebra $L^1(G(\mathbb{A})/G(K))$;
we refer the reader to  [Dixmier 1977]  \cite[Section 13.9]{D}  for details of this construction.

The aim of our note is a map from  the set $G(\mathbb{A}_{\infty})\backslash G(\mathbb{A})/G(K)$ to 
 the K-theory  of the $C^*$-algebra $\mathscr{A}$.   Since the K-groups are 
 abelian,  one gets the structure of an abelian group  on  the  set  $G(\mathbb{A}_{\infty})\backslash G(\mathbb{A})/G(K)$. 
 As an application, we consider the case $G\cong O(q)$, where $O(q)$ is the orthogonal group of the rational 
quadratic form $q(x)$. In this case,  one gets a higher composition law for the quadratic forms and a formula 
for their genera.  To formulate our results, we shall need the following definitions.

It is known that $\mathscr{A}$ is a stationary AF-algebra of rank $n=rank~G(K)$ \cite[Lemma 3.1]{Nik1}. 
Such an algebra is  given  by an  $n\times n$ integer matrix $A$ with  the determinant $\det~A=1$  [Effros 1981] \cite[Chapter 6]{E}.
Denote by $\mathscr{A}\rtimes_{\sigma}\mathbf{Z}$ the crossed product  of the $C^*$-algebra $\mathscr{A}$ 
by the shift automorphism $\sigma$ of $\mathscr{A}$  [Blackadar 1986] \cite[Exercise 10.11.9]{B}.
By $K_0$ we understand the zero $K$-group of an algebra.
It is known  that $K_0(\mathscr{A}\rtimes_{\sigma}\mathbf{Z})\cong \mathbf{Z}^n/(I-A)\mathbf{Z}^n$ and 
$|K_0(\mathscr{A}\rtimes_{\sigma}\mathbf{Z})|=|\det ~(I-A)|$, where $I$ is the identity matrix 
[Blackadar 1986] \cite[Theorem 10.2.1]{B}.
Our main results can be formulated as follows.
\begin{theorem}\label{thm1.1}
There exists a one-to-one map
\begin{equation}\label{eq1.2}
\phi: G(\mathbb{A}_{\infty})\backslash G(\mathbb{A})/G(K)\to K_0(\mathscr{A}\rtimes_{\sigma}\mathbf{Z}).
\end{equation}
In particular, the map (\ref{eq1.2})  defines the structure of an abelian group on the set $G(\mathbb{A}_{\infty})\backslash G(\mathbb{A})/G(K)$,  
so that $\phi$  becomes  an isomorphism of the abelian groups.
\end{theorem}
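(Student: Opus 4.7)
My plan is to define $\phi$ by sending each double coset to the $K_0$-class of its characteristic function (viewed as a projection in $\mathscr{A}$), and then to establish bijectivity by combining a Pimsner--Voiculescu computation with a cardinality match.

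First I would unpack the target. Since $\mathscr{A}$ is AF we have $K_1(\mathscr{A})=0$, so the Pimsner--Voiculescu six-term exact sequence for the crossed product by $\mathbf{Z}$ collapses to
\begin{equation*}
K_0(\mathscr{A}\rtimes\mathbf{Z}) \;\cong\; K_0(\mathscr{A})\bigl/(1-\alpha_*)K_0(\mathscr{A}),
\end{equation*}
where $\alpha$ is the shift automorphism. Because $\mathscr{A}$ is the stationary AF-algebra associated with the matrix $A$, the identification $K_0(\mathscr{A})\cong\mathbf{Z}^n$ transports $\alpha_*$ to $A$, recovering $\mathbf{Z}^n/(I-A)\mathbf{Z}^n$ of order $|\det(I-A)|$.

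Next I would construct $\phi$. A representative $\xi$ of a double coset $\mathcal{O}=G(\mathbb{A}_\infty)\,\xi\, G(K)$ carves out an open orbit in $G(\mathbb{A})/G(K)$, and its suitably normalized characteristic function $\chi_{\mathcal{O}}$ lies in $L^1(G(\mathbb{A})/G(K))$; since distinct orbits are disjoint, $\chi_{\mathcal{O}}$ is an idempotent with respect to the convolution \eqref{eq1.1} and so descends to a projection $p_{\mathcal{O}}\in\mathscr{A}$. Pushing forward along the canonical inclusion $\iota\colon\mathscr{A}\hookrightarrow\mathscr{A}\rtimes\mathbf{Z}$, I would set
\begin{equation*}
\phi(\mathcal{O}) \;=\; \iota_*\bigl[p_{\mathcal{O}}\bigr] \;\in\; K_0(\mathscr{A}\rtimes\mathbf{Z}).
\end{equation*}
Well-definedness should be routine: two representatives of $\mathcal{O}$ differ by an element of $G(\mathbb{A}_\infty)$ and this produces unitarily, hence Murray--von Neumann, equivalent projections.

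The heart of the argument is bijectivity. For injectivity I would verify that the projections $p_{\mathcal{O}_1},\dots,p_{\mathcal{O}_h}$ attached to distinct double cosets are mutually orthogonal in $\mathscr{A}$ and that their $K_0$-classes remain distinct modulo the image of $1-\alpha_*$; the content here is that the shift does not mix $G(\mathbb{A}_\infty)$-orbits that are genuinely inequivalent modulo $G(K)$. Surjectivity I would then deduce by matching cardinalities: the double coset set is finite and, by the construction of $\mathscr{A}$ recalled from \cite[Lemma~3.1]{Nik1}, its order coincides with $|\det(I-A)|=|K_0(\mathscr{A}\rtimes\mathbf{Z})|$. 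The main obstacle I anticipate is precisely this cardinality identification in the general reductive setting --- in the orthogonal case it reduces to the Platonov--Rapinchuk class-number formula \cite[Chapter 8]{PR}, but for arbitrary $G$ the stationary matrix $A$ must be calibrated so that $|\det(I-A)|$ reproduces $|G(\mathbb{A}_\infty)\backslash G(\mathbb{A})/G(K)|$. Once $\phi$ is a bijection, pulling back the abelian group structure of $K_0(\mathscr{A}\rtimes\mathbf{Z})$ along $\phi^{-1}$ furnishes the required group law on the double coset set, and $\phi$ becomes a group isomorphism tautologically.
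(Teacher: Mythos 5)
Your proposal has a genuine gap at exactly the point you flag as the ``main obstacle,'' and that gap is not a technicality: it is the content of the theorem. Your surjectivity argument rests on the cardinality identity $|G(\mathbb{A}_{\infty})\backslash G(\mathbb{A})/G(K)|=|\det(I-A)|=|K_0(\mathscr{A}\rtimes\mathbf{Z})|$, which you propose to obtain by ``calibrating'' the stationary matrix $A$. But this identity is precisely what Theorem \ref{thm1.1} together with Corollary \ref{cor1.2} asserts (it is the genus formula), so invoking it makes the argument circular. The paper does not proceed by counting projections: it first produces an automorphism $\alpha=Id-\epsilon$ of $G(\mathbb{A})$ whose ``$1-\alpha$'' structure realizes the passage from $G(\mathbb{A})/G(K)$ to the double coset space (Lemma \ref{lm1}), shows that this very automorphism induces the shift-type automorphism $\sigma$ defining the crossed product $\mathscr{A}\rtimes_{\sigma}\mathbf{Z}$ (Lemma \ref{lm2}, Figure 1), and only then applies the K-functor so that the cokernel of $1-\hat\alpha$ upstairs is matched against the cokernel of $1-\sigma_{\ast}$ downstairs (Lemma \ref{lm3}, Figure 2). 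It is this compatibility between the group-level operator $1-\hat\alpha$ and the K-theoretic operator $1-\sigma_{\ast}$ that links the double cosets to $\mathbf{Z}^n/(I-A)\mathbf{Z}^n$; your construction contains no analogue of this link, so the bijection has to be asserted rather than derived.

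Two further steps in your construction would fail as stated. First, the normalized characteristic function of a double coset $G(\mathbb{A}_{\infty})\xi G(K)$ is a convolution idempotent only when the coset is itself a group, i.e.\ essentially only for the principal class $N(\mathbb{A})=G(\mathbb{A}_{\infty})G(K)$; for a general representative $\xi$ the convolution $\chi_{\mathcal{O}}\ast\chi_{\mathcal{O}}$ spreads over several double cosets (this is the usual Hecke-algebra phenomenon), so your $p_{\mathcal{O}}$ need not be a projection in $\mathscr{A}$. Second, even granting projections, your injectivity step --- that the classes $[p_{\mathcal{O}_i}]$ stay distinct modulo $(1-\alpha_{\ast})K_0(\mathscr{A})$ --- is exactly where the arithmetic of the double cosets would have to interact with the subgroup $(I-A)\mathbf{Z}^n$, and nothing in your construction forces this; two orbit projections could perfectly well be Murray--von Neumann equivalent or differ by an element of the image of $1-\alpha_{\ast}$. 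Your Pimsner--Voiculescu computation of $K_0(\mathscr{A}\rtimes\mathbf{Z})\cong\mathbf{Z}^n/(I-A)\mathbf{Z}^n$ is fine (and is what the paper quotes as formula (\ref{eq2.2})), but the bridge from the adelic double cosets to that quotient group is missing.
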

\begin{corollary}\label{cor1.2}
If $G\cong O(q)$ is the orthogonal group of a rational quadratic form $q$,  then
(\ref{eq1.2}) defines a composition law  for  the equivalence classes of 
the quadratic forms.   In particular, the genus $g$ of  the quadratic form $q$ is given by 
the formula 
\begin{equation}\label{eq1.02}
g=|\det ~(I-A)|.
\end{equation}
\end{corollary}
The article is organized as follows. In Section 2 we briefly review the AF-algebras, crossed products   
and algebraic groups over the ring of adeles.  Theorem \ref{thm1.1} and corollary \ref{cor1.2} are 
proved  in Section 3.  An illustration of corollary \ref{cor1.2} can be found in Section 4.

\section{Preliminaries}
The $C^*$-algebras are covered in [Dixmier 1977]  \cite{D}.  
For the AF-algebras we refer the reader to [Effros 1981]  \cite{E}.
The K-theory of $C^*$-algebras is the subject of  [Blackadar 1986] \cite{B}.
An excellent introduction to the algebraic groups can be found in  
[Platonov \& Rapinchuk 1994] \cite{PR}. 

\subsection{AF-algebras}
\subsubsection{$C^*$-algebras}
The  $C^*$-algebra is an algebra $\mathcal{A}$ over $\mathbf{C}$ with a norm
$a\mapsto ||a||$ and an involution $a\mapsto a^*$ such that
it is complete with respect to the norm and $||ab||\le ||a||~ ||b||$
and $||a^*a||=||a^2||$ for all $a,b\in \mathcal{A}$.
Any commutative $C^*$-algebra is  isomorphic
to the algebra $C_0(X)$ of continuous complex-valued
functions on a locally compact Hausdorff space $X$; 
otherwise, the algebra $\mathcal{A}$ represents a noncommutative  topological
space.

\subsubsection{Crossed products}
Let $\sigma: \mathcal{A}\to\mathcal{A}$ be an automorphism of the $C^*$-algebra
$\mathcal{A}$. The idea of a crossed product $C^*$-algebra $\mathcal{A}\rtimes_{\sigma} \mathbf{Z}$
by $\sigma$ is to embed $\mathcal{A}$ into a larger $C^*$-algebra in which the automorphism 
$\sigma$ becomes an inner automorphism. Namely, let $G$ be a locally compact group 
and $\sigma$ a continuous homomorphism from $G$ into $Aut ~(\mathcal{A})$. 
A covariant representation of the triple $(\mathcal{A}, G, \sigma)$ is a pair of 
representations $(\pi,\rho)$ of $\mathcal{A}$ and $G$ on a Hilbert space,
such that $\rho(g)\pi(a)\rho(g)^*=\pi(\sigma_g(a))$ for all $a\in\mathcal{A}$ and 
$g\in G$.   The covariant representation defines an algebra whose completion in 
the operator norm is called a crossed product $C^*$-algebra $\mathcal{A}\rtimes_{\sigma} G$.

\subsubsection{AF-algebras}
An  AF-algebra  (Approximately Finite $C^*$-algebra) is defined to
be the  norm closure of an ascending sequence of   finite dimensional
$C^*$-algebras $M_n$,  where  $M_n$ is the $C^*$-algebra of the $n\times n$ matrices
with entries in $\mathbf{C}$. The index $n=(n_1,\dots,n_k)$ represents
a  semi-simple matrix algebra $M_n=M_{n_1}\oplus\dots\oplus M_{n_k}$.
The ascending sequence can be written as 
\begin{equation}\label{eq2.1}
M_1\buildrel\rm\varphi_1\over\longrightarrow M_2
   \buildrel\rm\varphi_2\over\longrightarrow\dots,
\end{equation}
where $M_i$ are the finite dimensional $C^*$-algebras and
$\varphi_i$ the homomorphisms between such algebras.  
If $\varphi_i=Const$, then the AF-algebra $\mathcal{A}$ is called 
stationary.   The  shift automorphism of  a stationary AF-algebra 
corresponds to the map $i\mapsto i+1$  in (\ref{eq2.1})
[Effros 1981]  \cite[p.37]{E}.

The homomorphisms $\varphi_i$ can be arranged into  a graph as follows. 
Let  $M_i=M_{i_1}\oplus\dots\oplus M_{i_k}$ and 
$M_{i'}=M_{i_1'}\oplus\dots\oplus M_{i_k'}$ be 
the semi-simple $C^*$-algebras and $\varphi_i: M_i\to M_{i'}$ the  homomorphism. 
One has  two sets of vertices $V_{i_1},\dots, V_{i_k}$ and $V_{i_1'},\dots, V_{i_k'}$
joined by  $a_{rs}$ edges  whenever the summand $M_{i_r}$ contains $a_{rs}$
copies of the summand $M_{i_s'}$ under the embedding $\varphi_i$. 
As $i$ varies, one obtains an infinite graph called the   Bratteli diagram of the
AF-algebra.  The $A=(a_{rs})$ is called  a  matrix of the partial multiplicities;
an infinite sequence of $A_i$ defines a unique AF-algebra. If $rank ~(A_i)=n=Const$, then 
the AF-algebra is said to be of {\it rank} $n$.   If the AF-algebra $\mathcal{A}$
is stationary,  then $A_i=A$, where $A$ is an $n\times n$ matrix with the non-negative
entries.

\subsubsection{$K_0$-groups}
For a  $C^*$-algebra $\mathcal{A}$,  let $V(\mathcal{A})$
be the union of projections in the $k\times k$
matrix $C^*$-algebra with entries in $\mathcal{A}$ taken over all $k=1,2,\dots,\infty$. 
The projections $p,q\in V(\mathcal{A})$ are  equivalent,  if there exists a partial
isometry $u$ such that $p=u^*u$ and $q=uu^*$. The equivalence
class of projection $p$ is denoted by $[p]$. 
The equivalence classes of orthogonal projections can be made to
a semigroup by putting $[p]+[q]=[p+q]$. The Grothendieck
completion of this semigroup to an abelian group is called
the  $K_0$-group of the algebra $\mathcal{A}$.
The functor $\mathcal{A}\to K_0(\mathcal{A})$ maps a category of the 
$C^*$-algebras into the category of abelian groups, so that
projections in the algebra $\mathcal{A}$ correspond to a positive
cone  $K_0^+\subset K_0(\mathcal{A})$ and the unit element $1\in \mathcal{A}$
corresponds to an order unit $u\in K_0(\mathcal{A})$.

Let $\mathcal{A}$ be the stationary AF-algebra given by an $n\times n$  matrix $A$.
Denote by $\sigma: \mathcal{A}\to\mathcal{A}$ the shift automorphism
of $\mathcal{A}$. Consider a crossed product $C^*$-algebra 
$\mathcal{A}\rtimes_{\sigma} \mathbf{Z}$ defined by the automorphism $\sigma$.  
The $K_0$-group of the crossed product $\mathcal{A}\rtimes_{\sigma} \mathbf{Z}$
is given by the formula:
\begin{equation}\label{eq2.2}
K_0(\mathcal{A}\rtimes_{\sigma} \mathbf{Z})\cong {\mathbf{Z}^n\over (I-A)\mathbf{Z}^n}, \quad \hbox{where} ~I=diag ~(1,1,\dots,1).
\end{equation}

\subsection{Algebraic groups over adeles}
\subsubsection{Algebraic groups}
An algebraic group is an algebraic variety $G$ together with (i) an element 
$e\in G$, (ii) a morphism $\mu:  G\times G\to G$ given by the formula
$(x,y)\mapsto xy$ and (iii) a morphism $i: G\to G$ given by the formula
$x\mapsto x^{-1}$ with respect to which the set $G$ is a group.
If $G$ is a non-compact variety, the algebraic group $G$ is said to be 
 non-compact.   The $G$ 
is called a  $K$-group if $G$ is a variety defined over the field $K$
and if $\mu$ and $i$ are defined over $K$.  In what follows, we deal 
with the linear algebraic groups, i.e. the subgroups of the general linear
group $GL_n$.  
An algebraic group $G$ is called reductive  if the unipotent radical of $G$
is trivial. An informal equivalent definition says that $G$ is reductive if and only if
a representation of $G$ is a direct sum of the irreducible representations.

\subsubsection{Orthogonal group of a quadratic form}
Let $q(x)=\{\sum_{i=1}^n \sum_{j=1}^n a_{ij} x_i x_j ~|~ a_{ij}\in\mathbf{Z}, ~n\ge 1\}$ be a quadratic form. 
Roughly speaking, the orthogonal group is a subgroup of the $GL_n$ which preserves the form $q(x)$. 
Namely, let $A=(a_{ij})$ be a symmetric matrix attached to the quadratic form $q(x)$.  
The $O(q)=\{g\in GL_n ~|~ gAg=A\}$ is called an  orthogonal group and 
$SO(q)=\{g\in O(q) ~|~ \det~g=1\}$ is called a  special  orthogonal group
of the form $q(x)$.

\subsubsection{Ring of adeles}
The adeles is a powerful tool describing  the Artin reciprocity for   the abelian extensions of a number field $K$. 
In intrinsic terms, the  ring of adeles  $\mathbb{A}$ of a field  $K$ is a subset of the direct product $\prod K_{v}$
taken over almost all places $K_{v}$ of  $K$ endowed with the natural topology. 
The  embeddings $K\hookrightarrow K_v$ induce a  discrete diagonal  embedding $K\hookrightarrow\mathbb{A}$; 
the image of such is  a  ring of  the principal adeles  of $\mathbb{A}$. 
The ring of integral adeles $\mathbb{A}_{\infty}:=\prod \mathcal{O}_v$, where $\mathcal{O}_v$ is a localization of the ring 
$\mathcal{O}_{K}$ of the integers of the field $K$.  The Artin reciprocity says that there exists a continuous homomorphism 
$\mathbb{A}^{\times}\to Gal~(K^{ab}|K)$,  where $\mathbb{A}^{\times}$ is a group of the invertible adeles (the idele group) 
and $Gal~(K^{ab}|K)$ is the absolute Galois group of the  abelian extensions of $K$ endowed with a profinite topology. 
On the other hand, there exists a canonical isomorphism $\mathbb{A}_{\infty}^{\times}\backslash\mathbb{A}^{\times}/K^{\times}\to Cl~(K)$,
where $\mathbb{A}_{\infty}^{\times}$ ($\mathbb{A}^{\times}$ and  $K^{\times},$ resp.)  is a  group of units of  the ring  $\mathbb{A}_{\infty}$ 
($\mathbb{A}$ and  $\mathcal{O}_{K}$, resp.)  and  $Cl~(K)$ is the ideal class group of the field $K$.

\subsubsection{Algebraic groups over adeles}
The algebraic groups over the ring of adeles can be viewed as an analog of the Artin recipricity for
the non-abelian extensions of the field $K$ and, therefore, we deal with a  noncommutative 
arithmetic  [Platonov \& Rapinchuk 1994] \cite[p. 243]{PR}.  Such groups are a starting point of the 
Langlands program [Langlands 1978] \cite{Lan1}.  

Let $G$ be an algebraic group.  It is known that  the double cosets  \linebreak
$G(\mathbb{A}_{\infty})\backslash G(\mathbb{A})/G(K)$
is a finite set [Borel 1963] \cite{Bor1}. In particular, if $G\cong GL_n$ then  
$|G(\mathbb{A}_{\infty})\backslash G(\mathbb{A})/G(K)|=h_{K}$ and
if $G\cong O(q)$ then $|G(\mathbb{A}_{\infty})\backslash G(\mathbb{A})/G(K)|=g$,
where $h_{K}=|Cl~(K)|$ is the class number of the field $K$ and $g$ is the genus of 
quadratic form $q(x)$, respectively. The cardinality of the set $G(\mathbb{A}_{\infty})\backslash G(\mathbb{A})/G(K)$
is a difficult open  problem.

\section{Proofs}
\subsection{Proof of theorem \ref{thm1.1}}
We shall split the proof in a series of lemmas.
\begin{lemma}\label{lm1}
Consider the group $Ext~(\mathscr{A}\rtimes_{\sigma}\mathbf{Z})$ consisting of 
the equivalence classes of extensions
\begin{equation}\label{eq3.1}
1\to\mathbf{C}\to E\to \mathscr{A}\rtimes_{\sigma}\mathbf{Z}\to 1
\end{equation}
of the  $\mathscr{A}\rtimes_{\sigma}\mathbf{Z}$ by the complex numbers $\mathbf{C}$. 
Then  $Ext~(\mathscr{A}\rtimes_{\sigma}\mathbf{Z})\cong K_0(\mathscr{A}\rtimes_{\sigma}\mathbf{Z})$. 
 \end{lemma}
\begin{proof}
The result follows from the equivalence of the $\mathbf{Ext}$ and the $\mathbf{K}_0$-functors 
on the category of $C^*$-algebras. The special case  of the $C^*$-algebra 
$\mathscr{A}\rtimes_{\sigma}\mathbf{Z}:=O_A$ can be found  in
[Blackadar 1986] \cite[Section 16.4.5]{B}.  
\end{proof}

\begin{lemma}\label{lm2}
Denote by  $E_0\cong \left(\mathscr{A}\rtimes_{\sigma}\mathbf{Z}\right)\oplus\mathbf{C}$
the trivial extension of the crossed product $\mathscr{A}\rtimes_{\sigma}\mathbf{Z}$
corresponding to the null element of the group  $Ext~(\mathscr{A}\rtimes_{\sigma}\mathbf{Z})$.
Then $E_0$ is a non-commutative coordinate ring of the group  $G(\mathbb{A})$,
i.e. there exists a covariant functor between the respective categories, such that 
the morphisms of $G(\mathbb{A})$ correspond to  the morphisms of $E_0$. 
\end{lemma}
\begin{proof}
(i) Recall that the AF-algebra $\mathscr{A}$ is a non-commutative coordinate ring of the 
group  $G(\mathbb{A})$ \cite[Lemma 3.1]{Nik1}. 
Consider the crossed product $\mathscr{A}\rtimes_{\sigma}\mathbf{Z}$
by the shift automorphism $\sigma$ of $\mathscr{A}$. It is well known, 
that there exists a canonical embedding of the $C^*$-algebras: 
\begin{equation}\label{eq3.2}
\mathscr{A}\hookrightarrow \mathscr{A}\rtimes_{\sigma}\mathbf{Z}.
\end{equation}
Using (\ref{eq3.2}) one can extend each morphism of $\mathscr{A}$ to 
such of the crossed product  $\mathscr{A}\rtimes_{\sigma}\mathbf{Z}$.
Such an extension is well defined and unique. Thus one gets a functor 
between the category of groups $G(\mathbb{A})$ and the category of 
crossed products $\mathscr{A}\rtimes_{\sigma}\mathbf{Z}$. In other words,
the  $\mathscr{A}\rtimes_{\sigma}\mathbf{Z}$ is a non-commutative coordinate
ring of the group $G(\mathbb{A})$.  

\medskip
(ii) On the other hand, each morphism $h$ of $\mathscr{A}\rtimes_{\sigma}\mathbf{Z}$
can be extended to
$E_0\cong \left(\mathscr{A}\rtimes_{\sigma}\mathbf{Z}\right)\oplus\mathbf{C}$
by the formula
\begin{equation}\label{eq3.3}
h(E_0)= h\left(\mathscr{A}\rtimes_{\sigma}\mathbf{Z}\right)\oplus\mathbf{C}. 
\end{equation}
 Therefore we get a functor 
from  the category of groups $G(\mathbb{A})$ and such of the $C^*$-algebras
$\left(\mathscr{A}\rtimes_{\sigma}\mathbf{Z}\right)\oplus\mathbf{C}$.
In other words, the  $E_0$ is a non-commutative coordinate
ring of the group $G(\mathbb{A})$.  
\end{proof}

\begin{lemma}\label{lm3}
There exists a one-to-one map 
\begin{equation}\label{eq3.4}
\phi:  G(\mathbb{A}_{\infty})\backslash G(\mathbb{A})/G(K)\to  K_0^{\mathbf{set}}(\mathscr{A}\rtimes_{\sigma}\mathbf{Z}),
\end{equation}
where  $K_0^{\mathbf{set}}(\mathscr{A}\rtimes_{\sigma}\mathbf{Z})$ is the set of elements of the 
group  $K_0(\mathscr{A}\rtimes_{\sigma}\mathbf{Z})$. 
\end{lemma}
\begin{proof}
Consider the exact sequence (\ref{eq3.1}) of the $C^*$-algebras:
\begin{equation}\label{eq3.5}
1\to\mathbf{C}\to E_0\to \mathscr{A}\rtimes_{\sigma}\mathbf{Z}\to 1,
\end{equation}
where $E_0\cong\left(\mathscr{A}\rtimes_{\sigma}\mathbf{Z}\right)\oplus\mathbf{C}$.
The exact sequence (\ref{eq3.5}) gives rise to the exact sequence of the  $K_0$-groups:
\begin{equation}\label{eq3.6}
1\to\mathbf{Z}\to K_0(E_0)\to K_0(\mathscr{A}\rtimes_{\sigma}\mathbf{Z})\to 1,
\end{equation}
where $K_0(\mathbf{C})\cong\mathbf{Z}$.

\medskip
In view of lemma \ref{lm2},  we have a functor $F$ acting by the formula $G(\mathbb{A})\mapsto  E_0$. 
On the other hand, the $K_0$ is a covariant functor from the category of $C^*$-algebras to the category of
 abelian groups. Thus, the composition $K_0\circ F$ defines a functor acting by the formula 
  $G(\mathbb{A})\mapsto K_0(E_0)$. 

\begin{figure}
\begin{picture}(300,110)(80,0)
\put(140,70){\vector(0,-1){35}}
\put(270,70){\vector(0,-1){35}}

\put(175,83){\vector(1,0){40}}
\put(175,23){\vector(1,0){40}}

\put(150,50){$K_0\circ F$}
\put(250,50){$\Phi$}
\put(125,20){$K_0(E_0)$}
\put(240,20){$K_0^{\mathbf{set}}(\mathscr{A}\rtimes_{\sigma}\mathbf{Z})$}

\put(125,80){$G(\mathbb{A})$}
\put(230,80){$G(\mathbb{A}_{\infty})\backslash G(\mathbb{A})/G(K)$}

\end{picture}
\caption{Double cosets  $G(\mathbb{A}_{\infty})\backslash G(\mathbb{A})/G(K)$ }
\end{figure}

\bigskip
Using the  exact sequence (\ref{eq3.6}),   one can define a map 
$K_0(E_0)\to K_0^{\mathbf{set}}(\mathscr{A}\rtimes_{\sigma}\mathbf{Z})$
from the abelian group to a finite set.  On the other hand, we  have 
a double coset map $G(\mathbb{A})\to G(\mathbb{A}_{\infty})\backslash G(\mathbb{A})/G(K)$
from an algebraic group to a finite set, see Section 2.2.   Bringing these facts together,  one gets a commutative 
diagram in Figure 1. 

\medskip
It remains to notice, that $\Phi$ is a functor from the category of finite sets to itself. 
This observation implies  that $\Phi$ is a trivial functor, i.e.  $|G(\mathbb{A}_{\infty})\backslash G(\mathbb{A})/G(K)|=  
|K_0^{\mathbf{set}}(\mathscr{A}\rtimes_{\sigma}\mathbf{Z})|$.  In particular, there exists  a one-to-one map   
\linebreak
$\phi:  G(\mathbb{A}_{\infty})\backslash G(\mathbb{A})/G(K)\to K_0^{\mathbf{set}}(\mathscr{A}\rtimes_{\sigma}\mathbf{Z})$
between the corresponding finite sets. Lemma \ref{lm3} follows. 
\end{proof}

\begin{corollary}\label{cor1}
The map $\phi^{-1}$ defines the structure of an abelian group
on the set $G(\mathbb{A}_{\infty})\backslash G(\mathbb{A})/G(K)$
 thus extending   $\phi$ to  an isomorphism of the  abelian groups.
\end{corollary}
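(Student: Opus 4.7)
The plan is to deduce this corollary directly from lemma \ref{lm3} via a standard transport-of-structure argument, using the fact that $K_0$ of any $C^*$-algebra is by construction an abelian group. Once $\phi$ is established as a bijection onto the abelian group $K_0(\mathscr{A}\rtimes_\sigma \mathbf{Z})$, the only thing left is to pull the group operation back along $\phi^{-1}$ and observe that $\phi$ automatically becomes a group isomorphism.

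Concretely, I would first recall that $K_0(\mathscr{A}\rtimes_\sigma \mathbf{Z})$ is an abelian group -- this is part of the functorial definition of $K_0$ in Section 2.1.4, and it is given explicitly by formula (\ref{eq2.2}) as $\mathbf{Z}^n/(I-A)\mathbf{Z}^n$. Then, using the bijection $\phi$ from lemma \ref{lm3}, I would define a binary operation on the double coset space by
\[
[g_1] \ast [g_2] \; := \; \phi^{-1}\bigl(\phi([g_1]) + \phi([g_2])\bigr),
\]
with identity element $\phi^{-1}(0)$ and inverses $[g]^{-1} := \phi^{-1}\bigl(-\phi([g])\bigr)$. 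Associativity, commutativity, existence of identity and of inverses are each inherited termwise from the corresponding properties in $K_0(\mathscr{A}\rtimes_\sigma \mathbf{Z})$ by applying $\phi^{-1}$ to the relevant identity in that group. By construction, $\phi\bigl([g_1]\ast[g_2]\bigr) = \phi([g_1]) + \phi([g_2])$, so $\phi$ is a homomorphism, and being a bijection it is an isomorphism of abelian groups.

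The main (in fact only) obstacle in this corollary is really contained in the preceding lemma \ref{lm3}, which produces the bijection $\phi$; once that is available, the corollary is a purely formal consequence. One subtlety worth flagging, though not required by the statement, is compatibility with the group structure already visible on the left-hand side: Section 2.2.4 identifies $G(\mathbb{A}_\infty)\backslash G(\mathbb{A})/G(K)$ with the quotient $G(\mathbb{A})/N(\mathbb{A})$, which is an abelian group because $N(\mathbb{A})$ is normal in the non-compact case (by [PR, Proposition 8.8]). Verifying that the transported structure from $K_0(\mathscr{A}\rtimes_\sigma \mathbf{Z})$ agrees with this natural quotient structure would require chasing an element through the commutative diagram of Figure 2 and through the crossed-product construction of the shift automorphism $\sigma$, but this is a natural-isomorphism check rather than a new obstacle.
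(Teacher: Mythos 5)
Your proposal is correct and follows essentially the same route as the paper: both transport the abelian group structure of $K_0(\mathscr{A}\rtimes_{\sigma}\mathbf{Z})$ to the double coset set through the bijection $\phi$ of Lemma \ref{lm3}, the paper phrasing this by choosing generators $x_i$ of the cyclic summands in (\ref{eq3.5}) and declaring $\phi^{-1}(x_i)$ to be generators, while you define the operation $[g_1]\ast[g_2]:=\phi^{-1}\bigl(\phi([g_1])+\phi([g_2])\bigr)$ directly. Your formulation is, if anything, the cleaner and more complete version of the same transport-of-structure argument, and your closing remark about compatibility with the quotient structure $G(\mathbb{A})/N(\mathbb{A})$ goes beyond what the paper checks.
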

\begin{proof}
Recall that 
\begin{equation}\label{eq3.7}
K_0(\mathscr{A}\rtimes_{\sigma}\mathbf{Z})\cong {\mathbf{Z}^n\over (I-A)\mathbf{Z}^n}\cong
{\mathbf{Z}\over p_1^{n_1}\mathbf{Z}}\oplus\dots\oplus{\mathbf{Z}\over p_k^{n_k}\mathbf{Z}},
\end{equation}
where $A$ is an $n\times n$ integer matrix defining the AF-algebra $\mathscr{A}$, the $p_i$ are prime
and $n_i$ are positive integer numbers. 
One can take a generator $x_i$ in each cyclic group $\mathbf{Z}/p_k^{n_k}\mathbf{Z}$ and let 
$\phi^{-1}(x_i)$ be a generator of the abelian group structure on the set 
$G(\mathbb{A}_{\infty})\backslash G(\mathbb{A})/G(K)$.  It is clear that the map $\phi$ defines 
an isomorphism between the two abelian groups. 
Corollary \ref{cor1} is proved.
\end{proof}

Theorem \ref{thm1.1} follows form lemma \ref{lm3} and corollary \ref{cor1}.

\subsection{Proof of corollary \ref{cor1.2}}
Recall that if $G\cong O(q)$ is the orthogonal group of a quadratic form $q(x)$,
then the equivalence classes of $q(x)$ correspond one-to-one to the double cosets 
$G(\mathbb{A}_{\infty})\backslash G(\mathbb{A})/G(K)$, see Section 2.2.4.  But in view 
of the corollary \ref{cor1}, the set $G(\mathbb{A}_{\infty})\backslash G(\mathbb{A})/G(K)$
has the natural structure of an abelian group defined by the formula (\ref{eq3.7}). 
In particular, the group operation defines a composition law for the equivalence 
classes of  quadratic  form $q(x)$.  The first part of corollary \ref{cor1.2} is proved. 

To express the genus $g$ of the form $q(x)$ in terms of an invariant of the algebra $\mathscr{A}$, 
recall that $g=|G(\mathbb{A}_{\infty})\backslash G(\mathbb{A})/G(K)|$,  see Section 2.2.4. 
But  $|G(\mathbb{A}_{\infty})\backslash G(\mathbb{A})/G(K)|=|K_0(\mathscr{A}\rtimes_{\sigma}\mathbf{Z})|$ 
and one gets from the formulas  (\ref{eq3.7}):  
\begin{eqnarray}\label{eq3.8}
g &=& |K_0(\mathscr{A}\rtimes_{\sigma}\mathbf{Z})| = \left|{\mathbf{Z}^n\over (I-A)\mathbf{Z}^n}\right| =
  \left|{\mathbf{Z}\over p_1^{n_1}\mathbf{Z}}\oplus\dots\oplus{\mathbf{Z}\over p_k^{n_k}\mathbf{Z}}\right|=\nonumber\\
  &=&  p_1^{n_1}\dots p_k^{n_k}=|\det ~(I-A)|. 
\end{eqnarray}
The genus formula of corollary \ref{cor1.2}  follows from the equations (\ref{eq3.8}).

\section{Binary quadratic forms}
Let $n=2$ and $\Delta>0$ be a square-free integer, i.e. $q_{\Delta}(x)$ is an indefinite binary quadratic form.
In view of corollary \ref{cor1.2},  we are looking for a matrix $A$ with $\det~A=1$, 
such that
\begin{equation}\label{eq4.1}
g(q_{f^2\Delta}(x))=|\det~(I-A^k)|,
\end{equation}
where $f\ge 1$ is a conductor of the quadratic form and $k\ge 1$ is an integer. 
\begin{remark}
The left and right sides of equation (\ref{eq4.1}) depend only on the integers $f$ and $k$,
respectively. Indeed, the left side depends only on the conductor $f$,  if the discriminant $\Delta$
is a fixed square-free integer. For the right side,  we have the easily verified equalities 
$|\det~(I-A^k)|=tr~(A^k)-2=2T_k \left({1\over 2}tr~(A)\right)-2=
2T_k\left({1\over 2}\sqrt{\Delta+4}\right)-2$, where $T_k(x)$ is the Chebyshev polynomial of degree $k$. 
\end{remark}
Let $f$ and $k$ be the least integers satisfying equation (\ref{eq4.1}).
Recall that the equivalence classes of the quadratic form $q_{f^2\Delta}(x)$
correspond one-to-one to the  (narrow) ideal classes of the order $R_f:=\mathbf{Z}+fO_K$
in the real quadratic field $K:=\mathbf{Q}(\sqrt{\Delta})$.  The number $h_{R_f}$
of such classes is calculated by the formula:
\begin{equation}\label{eq4.2}
h_{R_f}=h_K  {f \over e_f}\prod_{p|f}\left(1-\left({\Delta\over p}\right){1\over p}\right),
\end{equation}
 where $h_K$ is the class number of the field $K$,  $e_f$ is the index
of the group of units of  the order $R_f$ in the group of units of the ring $O_K$, $p$ is a prime number and 
$\left({\Delta\over p}\right)$ is the Legendre symbol  [Hasse 1950]  \cite[pp. 297 and 351]{H}. 
Thus the genus of the binary quadratic form of discriminant $\Delta>0$ is given by the formula:
\begin{equation}\label{eq4.3}
g(q_{\Delta}(x))={|\det~(I-A^k)| \over {f \over e_f}\prod_{p|f}\left(1-\left({\Delta\over p}\right){1\over p}\right)}.
\end{equation}

\begin{figure}
\begin{picture}(300,60)(0,0)
\put(108,27){$\bullet$}
\put(118,18){$\bullet$}
\put(138,18){$\bullet$}
\put(158,18){$\bullet$}
\put(118,38){$\bullet$}
\put(138,38){$\bullet$}
\put(158,38){$\bullet$}
\put(110,30){\line(1,1){10}}
\put(110,30){\line(1,-1){10}}
\put(140,42){\line(1,0){20}}
\put(140,39){\line(1,0){20}}
\put(140,20){\line(1,0){20}}
\put(120,20){\line(1,0){20}}
\put(120,42){\line(1,0){20}}
\put(120,39){\line(1,0){20}}
\put(120,40){\line(1,-1){20}}
\put(120,20){\line(1,1){20}}
\put(140,40){\line(1,-1){20}}
\put(140,20){\line(1,1){20}}

\put(180,20){$\dots$}
\put(180,40){$\dots$}

\end{picture}
\caption{Bratteli diagram of the AF-algebra $\mathscr{A}$.}
\end{figure}

\begin{example}\label{ex4.2}
Let $G\cong O(q)$ be the orthogonal group of the quadratic form 
\begin{equation}\label{eq4.4}
q(u,v)=u^2+3uv+v^2.
\end{equation}
The discriminant  is $\Delta=5$ and therefore the $q(u,v)$ is an indefinite quadratic form.
The  AF-algebra $\mathscr{A}$ corresponding to the group $G(\mathbb{A})$  is represented by the matrix
\begin{equation}\label{eq4.5}
A=\left(
\begin{matrix}
2 & 1\cr
1 & 1
\end{matrix}
\right).
\end{equation}
The Bratteli diagram of algebra $\mathscr{A}$ is shown in Figure 2. 
To calculate the genus of $q(u,v)$, we shall use formula  (\ref{eq4.3}) with $f=k=1$.   
Namely, one gets
\begin{equation}\label{eq4.6}
g=|\det~(I-A)|=
\left|\det~
\left(
\begin{matrix}
-1 & -1\cr
-1 & 0
\end{matrix}
\right)
\right|=1.
\end{equation}
\end{example}

\bibliographystyle{amsplain}

\begin{thebibliography}{99}

\bibitem{Bha1}
M.~Bhargava, 
\textit{Higher composition laws I:  A new view on Gauss composition, and quadratic generalizations},
Annals of Math. {\bf 159} (2004),   217-250. 

\bibitem{Bha2}
M.~Bhargava, 
\textit{Higher composition laws II:  On cubic analogues of Gauss composition},
Annals of Math. {\bf 159} (2004),   865-886. 

\bibitem{Bha3}
M.~Bhargava, 
\textit{Higher composition laws III:  The parametrization of quartic rings},
Annals of Math. {\bf 159} (2004),   1329-1360. 



\bibitem{Bha4}
M.~Bhargava, 
\textit{Higher composition laws IV:  The parametrization of quintic rings},
Annals of Math. {\bf 167} (2008),  53-94. 

\bibitem{Bha5}
M.~Bhargava, \textit{Higher composition laws and applications}, 
Proceedings of the ICM, Madrid  2006,  pp. 271-294.   


\bibitem{B}
 B.~Blackadar, \textit{$K$-Theory for Operator Algebras}, MSRI Publications,
 Springer, 1986.

\bibitem{Bor1}
A.~Borel, \textit{Some finiteness properties of adele groups over number fields},
Publications math\'ematiques de l'I.H.\'E.S. 16 (1963), 5-30. 


\bibitem{C}
J.~W.~S.~Cassels, \textit{Rational Quadratic Forms},  Academic Press, 1978. 

\bibitem{D}
J.~Dixmier, \textit{$C^*$-Algebras}, North-Holland Publishing Company, 1977. 


\bibitem{E}    
E.~G.~Effros, \textit{Dimensions and $C^*$-Algebras}, in: Conf. Board of the Math.
Sciences, Regional conference series in Math. {\bf 46},  AMS,  1981.


\bibitem{H}
H.~Hasse,  \textit{Vorlesungen \"uber   Zahlentheorie},   Springer, 1950.




\bibitem{Lan1}
R.~P.~Langlands, \textit{$L$-functions and automorphic representations},
Proceedings of the ICM 1978, Helsinki, 1978, pp. 165-175. 


\bibitem{Nik1}
I.~Nikolaev, \textit{Langlands reciprocity for $C^*$-algebras}, arXiv:1505.03054. 

\bibitem{PR}
V.~Platonov and A.~Rapinchuk, \textit{Algebraic Groups and Number Theory}, Academic Press, 1994. 

\end{thebibliography}


\end{document}